\newtheorem{definition}{Definition}[section]
\newtheorem{theorem}[definition]{Theorem}
\newtheorem{lemma}[definition]{Lemma}
\newtheorem{corollary}[definition]{Corollary}
\newtheorem{proposition}[definition]{Proposition}
\newtheorem{question}[definition]{Question}
\def \N {\mathbb N}
\begin{document}
\title{The nonrepetitive coloring of grids}
\author{Tianyi Tao \\
School of mathematical sciences \\
Fudan University \\
{\tt tytao20@fudan.edu.cn}}
\date{\today}
\maketitle
	
\begin{abstract}
For a graph $G$, a vertex coloring $f$ is called nonrepetitive if for all $k\in\N$ and all $P_{2k}=\langle v_1,\cdots,v_k,v_{k+1},\cdots,v_{2k}\rangle$ (path of $2k$ vertices) in $G$, there must be some $1\le i\le k$ such that $f(v_i)\not=f(v_{k+i})$.

We use $\pi(G)$ to denote the minimum number of colors required for $G$ to be nonrepetitively colored.
		
In 1906, Thue proved that $\pi(P_n)\le3$ for all $n$. In this paper, we focus on grids, which are the Cartesian products of paths. We prove that $5\le\pi(P_n\square P_n)\le12$ for sufficiently large $n$, where the previous best lower bound was 4 and upper bound was 16. Moreover, we also discuss nonrepetitive coloring of the Cartesian product of complete graphs.
\end{abstract}
	
\section{Introduction}
First, we standardize the notation used in this paper. We use the uppercase letters $A$, $B$, $C$, $D$, $X$, $Y$, $Z$, $W$ and the lowercase letters $a$, $b$, $c$, $d$, $x$, $y$, $z$, $w$ to denote colors, the lowercase letter $f$ to denote a coloring function of a graph, the uppercase letter $F$ to denote a function defined on integers, the uppercase letter $P$ to denote a path, the uppercase letter $W$ to denote a walk or lazy walk, and the lowercase letters $u$ and $v$ to denote vertices of a graph (notations may have superscripts or subscripts). In section 4, we use positive integers to denote colors.

In this paper, a $gird$ is the $Cartesian$ $product$ of two paths. For each integer $s\ge2$, an $s$-dimensional grid is the Cartesian product of $s$ paths.

\begin{definition}
Let $G$ be a graph. Suppose $f$ is a vertex coloring of $G$, $k$ is a positive integer and $P_{2k}=\langle v_1,\cdots,v_k,v_{k+1},\cdots,v_{2k}\rangle$ is a path of $2k$ vertices in $G$. Then, we say that $f$ repetitively colors $P_{2k}$ if for all $1\le i\le k,$ $f(v_i)=f(v_{k+i})$. If no even-vertex path of $G$ is repetitively colored by $f$, then we say that $f$ is a (path-)nonrepetitive coloring of $G$. We use $\pi(G)$ to denote the minimum number of colors required for $G$ to be nonrepetitively colored, which is referred to as the nonrepetitive chromatic number of G.
\end{definition}

It is easy to see that a nonrepetitive coloring is a proper coloring.

The origin of studying nonrepetitive coloring can be traced back to the theorem proven by Thue in 1906, with a notable new proof provided by Leech in 1957, which is stated as follows:

\begin{theorem}[\cite{leech19572726,thue1906uber}]
$\pi(P_n)=3$ for all $n\ge4$.
\end{theorem}

For each $n\in\N$, we define a function $F_{P,n}:[n]\to\{a,b,c\}$ such that when each vertex $v_i$ in a path $P_n=\langle v_1,\cdots,v_n\rangle$ receive the color $F_{P,n}(i)$, $P_n$ is nonrepetitively colored. The function $F_{P,n}$ exists due to the above theorem.

Figure 1 provides an example of a nonrepetitive coloring of a path.

\begin{figure}[h] 
	\centering
	\includegraphics[width=0.8\linewidth]{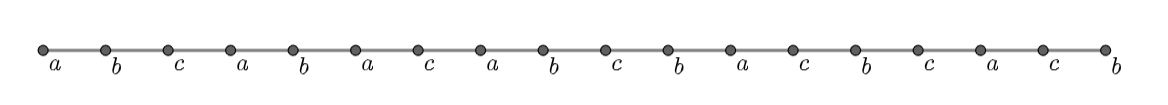}
	\caption{}
\end{figure}

The concept of nonrepetitive coloring of general graphs was introduced by Alon et al.\cite{alon2002nonrepetitive} in 2002. In this paper, they proved that $\pi(G)=O(\Delta^2)$ where $\Delta$ is the maximum degree of $G$. Several authors subsequently improved this result, see \cite{dujmovic2016nonrepetitive,grytczuk2007nonrepetitive,harant2012nonrepetitive,kolipaka2012sharper,RosenfeldMatthieu2020AAtN}. In 2020, Wood et al.\cite{dujmovicVida2020Pghb,dujmovic2020planar,kundgen2008nonrepetitive} proved $\pi(G)\le768$ if $G$ is a planar graph. Additionally, there are some related work in \cite{barat2007square,barat2005NoNG,bose2017new,brevsar2007nonrepetitive,brevsar2004square,fiorenzi2011thue,grytczuk2013new,WoodDavidR2021NGC}.

In \cite{dujmovicVida2020Pghb}, the core method is proving that planar graphs can be embedded into some product structure of several graph classes with bounded nonrepetitive chromatic number, which is also one of the motivations for studying grids in this paper.

We now define the two types of products that will be used in this paper.

\begin{definition}
Given graphs $G_1$ and $G_2$, the Cartesian product of $G_1$ and $G_2$, denoted by $G_1\square G_2$, is the graph with vertex set $V(G_1)\times V(G_2)$, where $(v_1,v_2)$ and $(v_1',v_2')$ are adjacent if $v_1=v_1'$ and $v_2v_2'\in E(G_2)$ or $v_2=v_2'$ and $v_1v_1'\in E(G_1)$; Similarly, the strong product of $G_1$ and $G_2$, denoted by $G_1\boxtimes G_2$, is the graph with vertex set $V(G_1)\times V(G_2)$, where $(v_1,v_2)$ and $(v_1',v_2')$ are adjacent if $(v_1,v_2)$ and $(v_1',v_2')$ are adjacent in $G_1\square G_2$, or $v_1v_1'\in E(G_1)$ and $v_2v_2'\in E(G_2)$.
\end{definition}

It is easy to see that $G_1\square G_2$ is a subgraph of $G_1\boxtimes G_2$.

For our approach, we need to introduce the following concepts:

\begin{definition}
For $k\in\N$, a walk $W_{2k}=\langle v_1,\cdots,v_k,v_{k+1},\cdots,v_{2k}\rangle$ is boring if $v_i=v_{k+i}$ for all $1\le i\le k$.

Let $G$ be a graph, and let $f$ be a vertex coloring of $G$. We say $f$ is walk-nonrepetitive if for all $k$, every repetitively colored walk $W_{2k}$ is boring. That is, $f(v_i)=f(v_{k+i})$ for all $i$ implies $v_i=v_{k+i}$ for all $i$.

We use $\sigma(G)$ to denote the minimum number of colors required for $G$ to be walk-nonrepetitively colored.
\end{definition}

Note that a path is always a walk, so $\pi(G)\le\sigma(G).$

\begin{definition}
A lazy walk in a graph $G$ is a walk in the pseudograph obtained from $G$ by adding a
loop at each vertex. In other words, in a lazy walk, two consecutive vertices may be the same.
\end{definition}

We can also define the concept of lazy walk-nonrepetitive coloring. However, it is easy to check this is an equivalent concept to walk-nonrepetitive coloring. In other words, if $f$ is a walk-nonrepetitive coloring of $G$, then each repetitively colored lazy walk is boring.

Now, we make a simple observation. In a walk-nonrepetitive coloring of $G$, for each $v\in V(G)$ and two neighbors $u_1,u_2$ of $v$, $u_1$ and $u_2$ must have different colors. So it is easy to check that $\sigma(P_6)>3$.

\begin{theorem}[\cite{barat2005NoNG,kundgen2008nonrepetitive}]
$\sigma(P_n)=4$ for all $n\ge6.$
\end{theorem}

\begin{proof}
Let $F_{P,n}:[n]\to\{a,b,c\}$ be the function in Theorem 1.2, we now define a function $f_{W,n}:[n]\to\{a,b,c,d\}$ as follows:

\[F_{W,n}(i)=
\left\{
\begin{array}{lcl}
	d,&i\equiv 0\pmod 3,\\
	F_{P,n}(i-\lfloor\frac{i}{3}\rfloor),&i\not\equiv 0\pmod 3.\\
\end{array}
\right.
\]

For a path $P_n=\langle v_1,\cdots,v_n\rangle$, let $f(v_i)=F_{W,n}(i)$, then $f$ is a walk-nonrepetitive coloring of $P_n$.
\end{proof}

Figure 2 provides an example of a walk-nonrepetitive coloring of a path.

\begin{figure}[h] 
	\centering
	\includegraphics[width=0.8\linewidth]{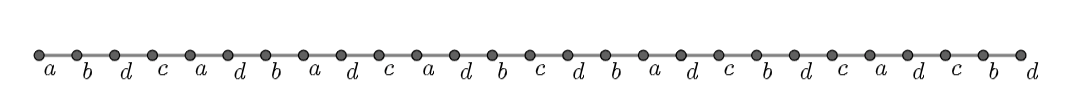}
	\caption{}
\end{figure}

\begin{proposition}[\cite{barat2007square}]
{\rm (i)} If $G'$ is a subgraph of $G$, then $\pi(G')\le\pi(G)$, $\sigma(G')\le\sigma(G)$;

{\rm (ii)} Let $G_1$ and $G_2$ be graphs, then $\sigma(G_1\boxtimes G_2)\le\sigma(G_1)\sigma(G_2)$.
\end{proposition}

\begin{proof}
(i) It is obvious.

(ii) Let $f_1$ and $f_2$ be walk-nonrepetitive coloring on $G_1$ and $G_2$, respectively. We define the following $product$ $coloring$ $f=(f_1,f_2)$ on $G_1\boxtimes G_2$:
\[f((v_1,v_2))=(f_1(v_1),f_2(v_2)).\]
It is not difficult to verify that $f$ is a walk-nonrepetitive coloring of $G_1\boxtimes G_2$. The details of the proof can be found in \cite{barat2007square} or \cite{WoodDavidR2021NGC}.
\end{proof}

\begin{corollary}[\cite{kundgen2008nonrepetitive}]
For all $n$, $\pi(P_n\square P_n)\le\sigma(P_n\square P_n)\le\sigma(P_n\boxtimes P_n)\le\sigma(P_n)^2\le16.$
\end{corollary}

Figure 3 is an example of a nonrepetitive 16-coloring of a grid.

\begin{figure}[h] 
	\centering
	\begin{minipage}[t]{0.48\textwidth}
		\centering
		\includegraphics[width=\textwidth]{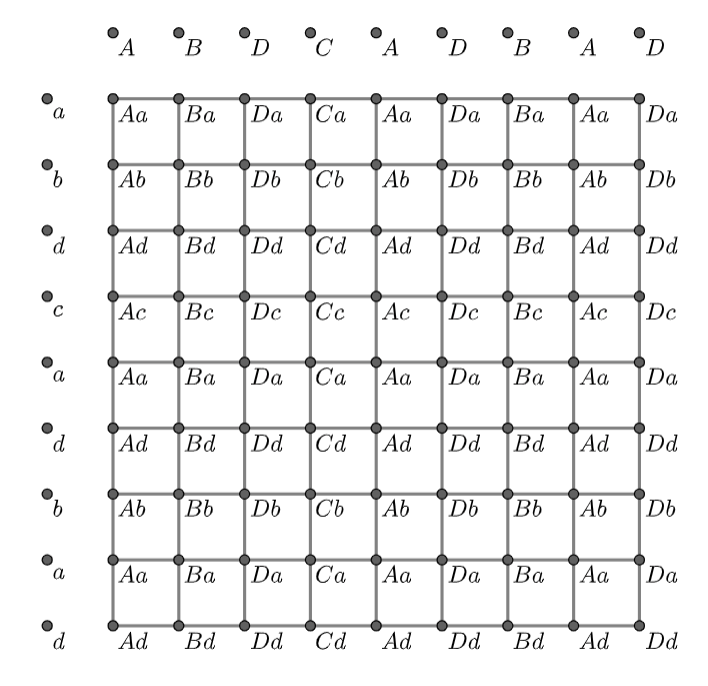}
		\caption{}
	\end{minipage}\hfill
	\begin{minipage}[t]{0.48\textwidth}
		\centering
		\includegraphics[width=\textwidth]{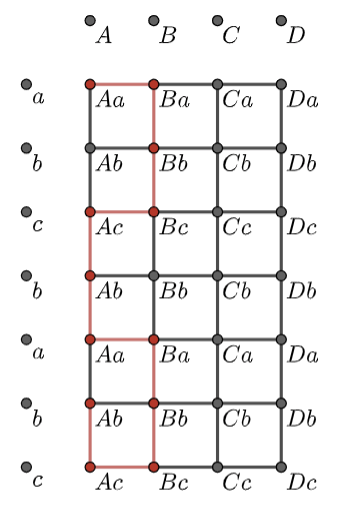}
		\caption{}
	\end{minipage}
\end{figure}

We now consider a natural question. Suppose we assign a product coloring $f=(f_1,f_2)$ to a grid, where $f_1$ is a nonrepetitive 3-coloring of a path, and $f_2$ is a walk-nonrepetitive 4-coloring of a path. Then, is $f$ a nonrepetitive coloring of the grid? Unfortunately, this is incorrect. Figure 4 provides a counterexample, where the repetitive path is highlighted in red. Furthermore, regardless of how a nonrepetitive 3-coloring is designed, there will always exist repetitive $strolls$, where a walk $\langle v_1,\cdots.v_k,v_{k+1},\cdots,v_{2k}\rangle$ is called a stroll if $v_i\not=v_{k+i}$ for all $1\le i\le k$, which indicates that when studying nonrepetitive colorings of grid, product colorings do not yield results better than using 16 colors (see Proposition 3.3 of \cite{WoodDavidR2021NGC}).

In Section 2, we give a coloring to show that $\pi(P_n\square P_n)\le12$. In Section 3, we show that $\pi(P_n\square P_n)\ge5$. In Section 4, we conduct some explorations into the nonrepetitive colorings of Cartesian products of complete graphs. Finally, in Section 5, we pose some open questions.

\section{Upper bound}
\begin{definition}
Label the vertices of the grid $P_n\square P_n$ sequentially from $v_{(1,1)}$ to $v_{(n,n)}$, where $v_{(i,j)}\sim v_{(i',j')}$ if and only if $i=i'$  and $|j-j'|=1$ or $j=j'$ and $|i-i'|=1$. Define the location function $l_1,l_2:V(P_n\square P_n)\to[n]$ as follows: 
\[l_1(v_{(i,j)})=i,\quad l_2(v_{(i,j)})=j.\]

For $2\le r\le2n$, we call the set $L_r=\{v\in V(P_n\square P_n):l_1(v)+l_2(v)=r\}$ a left line.

For $1-n\le r\le n-1$, we call the set $R_r=\{v\in V(P_n\square P_n):l_1(v)-l_2(v)=r\}$ a right line.
\end{definition}

\begin{lemma}
Let $m\ge 2n$. Using the notations in Definition 2.1 and the proof of Theorem 1.6, we construct a vertex coloring $f$ of $P_n\square P_n$ as follows: For all $2\le r\le2n$, let $f(v)=F_{W,m}(r)$ if $v\in L_r$ (Figure 5). Then every repetitive lazy walk $W_{2k}=\langle v_1,\cdots,v_k,v_{k+1},\cdots,v_{2k}\rangle$ satisfies that $v_1$ and $v_{k+1}$ are on the same left line, where $k$ is an arbitrary positive integer.
\end{lemma}

\begin{proof}
Construct a path $P$ as follows: $V(P)=\{L_r:2\le r\le2n\}$, and $L_r\sim L_{r+1}$ for all $r$. In other words, $P$ is obtained by contracting each left line of $P_n \square P_n$ into a single vertex. $P$ inherits a walk-nonrepetitive coloring. $W_{2k}$ corresponds to a repetitive lazy walk in $P$, hence it is boring. As a result, $v_1$ and $v_{k+1}$ lie on the same left line in $P_n\square P_n$.  
\end{proof}

\begin{figure}[h] 
	\centering
	\begin{minipage}[t]{0.48\textwidth}
		\centering
		\includegraphics[width=\textwidth]{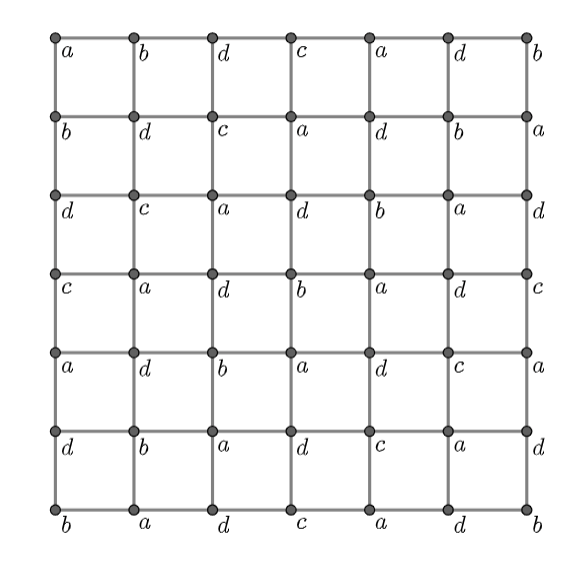}
		\caption{}
	\end{minipage}\hfill
	\begin{minipage}[t]{0.48\textwidth}
		\centering
		\includegraphics[width=\textwidth]{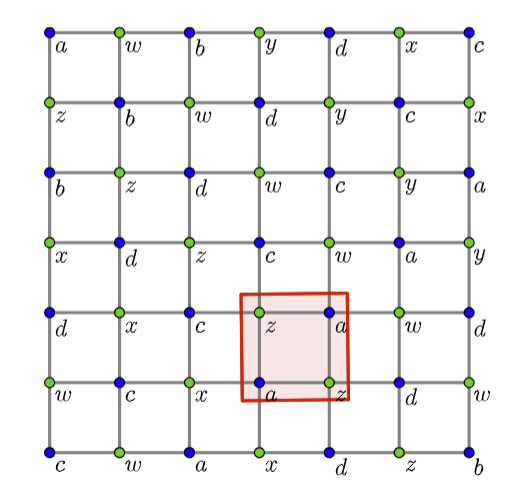}
		\caption{}
	\end{minipage}
\end{figure}

\begin{definition}
For a graph $G$ and a positive integer $s$, define the distance-$s$ graph $G^s$ as follows: $V(G^s)=V(G)$, and $u\sim v$ in $G^s$ if the distance between $u$ and $v$ in $G$ is exactly $s$.
\end{definition}

\begin{theorem}
$\pi(P_n\square P_n)\le 12$ for all $n$.
\end{theorem}

\begin{proof}
Let $m$ be a sufficiently large positive integer. Consider $f_{W,m}^{(1)}:[m]\to\{a,b,c,d\}$ and $f_{W,m}^{(2)}:[m]\to\{x,y,z,w\}$ in the proof of Theorem 1.6. We define a coloring $f_0$ of $P_n\square P_n$ as follows: 

\[f_0(v)=
\left\{
\begin{array}{lcl}
	f_{W,m}^{(1)}(\frac{r}{2}),&v\in L_r, r\equiv 0\pmod 2,\\
	f_{W,m}^{(2)}(\lfloor\frac{r+n+1}{2}\rfloor),&v\in R_r, r\not\equiv 0\pmod 2.\\
\end{array}
\right.
\]

Figure 6 is a schematic diagram of $f_0$. We can observe that $f_0$ is not a nonrepetitive coloring, as there exists a repetitive path of 4 vertices in the red range. Our goal is to prove that if we can destroy all repetitive 4-paths like this, then there will be no more repetitive paths in the graph.

For all $k$, let $P_{2k}=\langle v_1,\cdots,v_k,v_{k+1},\cdots,v_{2k}\rangle$ be a repetitive path in $P_n\square P_n$ under the coloring $f_0$. It is easy to see that $k$ must be even. Let
\[P_{\rm odd}=\{v_1,v_3,\cdots,v_{k-1},v_{k+1},v_{k+3},\cdots,v_{2k-1}\},\]
\[P_{\rm even}=\{v_2,v_4,\cdots,v_{k},v_{k+2},v_{k+4},\cdots,v_{2k}\}.\]
Without loss of generality, we assume that $l_1(v_1)+l_2(v_1)$ is even, which indicates that the colors of vertices in $P_{\rm odd}$ are chosen from $\{a,b,c,d\}$, and the colors of vertices in $P_{\rm even}$ are chosen from $\{x,y,z,w\}$. Note that $P_{\rm odd}$ and $P_{\rm even}$ are both repetitive lazy walks in $(P_n\square P_n)^2$. By Lemma 2.2, $v_1$ and $v_{k+1}$ are in the same left line, $v_2$ and $v_{k+2}$ are in the same right line. This indicates that $v_1\sim v_{k+2}$ and $v_2\sim v_{k+1}$ (Figure 7).

\begin{figure}[h] 
	\centering
	\begin{minipage}[t]{0.48\textwidth}
		\centering
		\includegraphics[width=\textwidth]{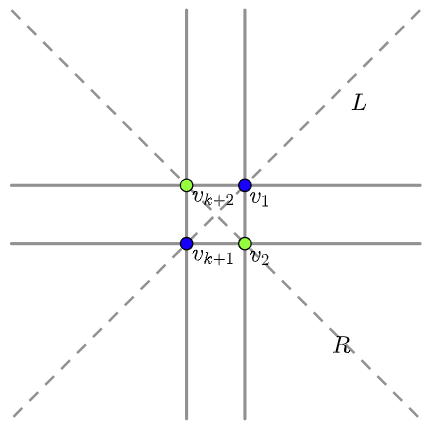}
		\caption{}
	\end{minipage}\hfill
	\begin{minipage}[t]{0.48\textwidth}
		\centering
		\includegraphics[width=\textwidth]{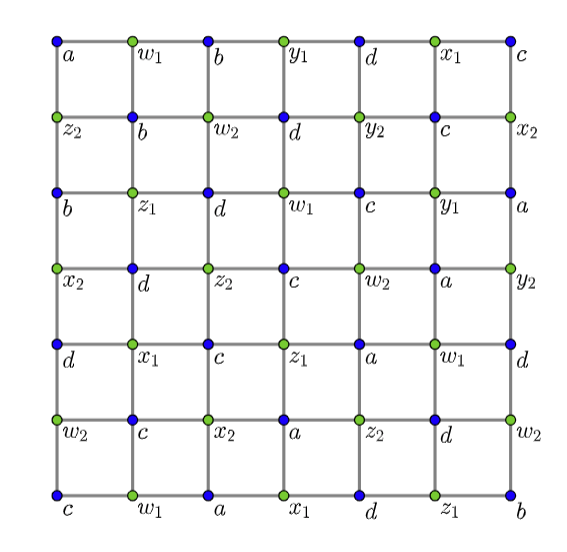}
		\caption{}
	\end{minipage}
\end{figure}

We now divide colors $x, y, z, w$ into $x_1, x_2, y_1, y_2, z_1, z_2, w_1, w_2$ by parity of $l_1$. Let
\[f(v)=
\left\{
\begin{array}{lcl}
	f_0(v),&\text{if }f_0(v)\in\{a,b,c,d\},\\
	x_1,y_1,z_1,w_1,&\text {if } f_0(v)=x,y,z,w\text{ respectively, }l_1(v) \text{ is odd},\\
	x_2,y_2,z_2,w_2,&\text{ if } f_0(v)=x,y,z,w\text{ respectively, }l_1(v) \text{ is even}.
\end{array}
\right.
\]
Then $f$ is a nonrepetitive coloring of $P_n\square P_n$ using 12 colors (Figure 8).
\end{proof}

Next, we briefly talk about the high-dimensional case, which is fundamentally similar to the 2-dimension approach, but with certain distinctions in the details.

\begin{theorem}
$\pi(P_n\square P_n\square P_n)\le28$ for all $n$.
\end{theorem}

\begin{proof}
Let $m$ be a sufficiently large positive integer. Consider $f_{W,m}^{(1)}:[m]\to\{a,b,c,d\}$, $f_{W,m}^{(2)}:[m]\to\{x,y,z,w\}$ and $f_{W,m}^{(3)}:[m]\to\{X,Y,Z,W\}$ in the proof of Theorem 1.6. Just like in Definition 2.1, we label the vertices of $P_n\square P_n\square P_n$ sequentially from $v_{(1,1,1)}$ to $v_{(n,n,n)}$, and define the location function $l_1$, $l_2$ and $l_3$.

We call the set $H_r=\{v\in V((P_n\square P_n\square P_n):l_1(v)+l_2(v)+l_3(v)=r\}$ a $left$ $plane$. Note that every left plane is an independent set. For all fixed vertex $v_0$, we call the set $L_{v_0}=\{v\in V(P_n\square P_n\square P_n):l_1(v)-l_1(v_0)=l_2(v)-l_2(v_0)=l_3(v)-l_3(v_0)\}$ a $right$ $line$ passing through $v_0$.

Based on the parity of the index $r=l_1(v)+l_2(v)+l_3(v)$, we classify the vertices $v$ of graph $P_n\square P_n\square P_n$ into two types. We then define a coloring $f_0$ of $P_n\square P_n\square P_n$ as follows:

If $l_1(v)+l_2(v)+l_3(v)=r$ is odd, then we let $f_0(v)=f_{W,m}^{(1)}(r)$. Note that all vertices on each left plane of odd $r$ share the same color.

If $l_1(v)+l_2(v)+l_3(v)=r$ is even, then we apply a 16-coloring scheme to ensure that all vertices lying on each given right line are assigned a common color. Below are the details of this coloring scheme.

We consider an arbitrary right plane $H=H_r$. Let $H^2$ be the induced subgraph of $V(H)$ in $(P_n\square P_n\square P_n)^2$, then $H^2$ is a $regular$ $triangular$ $tiling$ $graph$ (Figure 9) of some size, which is a subgraph of $P_m\boxtimes P_m$.

\begin{figure}[h] 
	\centering
	\includegraphics[width=0.25\linewidth]{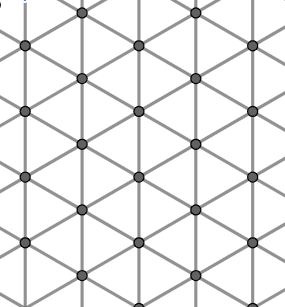}
	\caption{}
\end{figure}

So we can utilize product coloring $(f_2,f_3)$ to provide a walk-nonrepetitive 16-coloring for $P_m\boxtimes P_m$, and $H^2$ inherits this coloring. Since $m$ is sufficiently large, we can translate the coloring pattern along the direction of right lines to ensure that each left plane with an even index $r$ has a 16-coloring that is walk-nonrepetitive on its distance-2 graph, and each right line intersects all left planes with even indices $r$ at vertices of the same color. (See Figure 10. The dashed lines indicate the edges in the distance-2 graph.)

\begin{figure}[h] 
	\centering
	\begin{minipage}[h]{0.48\textwidth}
		\centering
		\includegraphics[width=\textwidth]{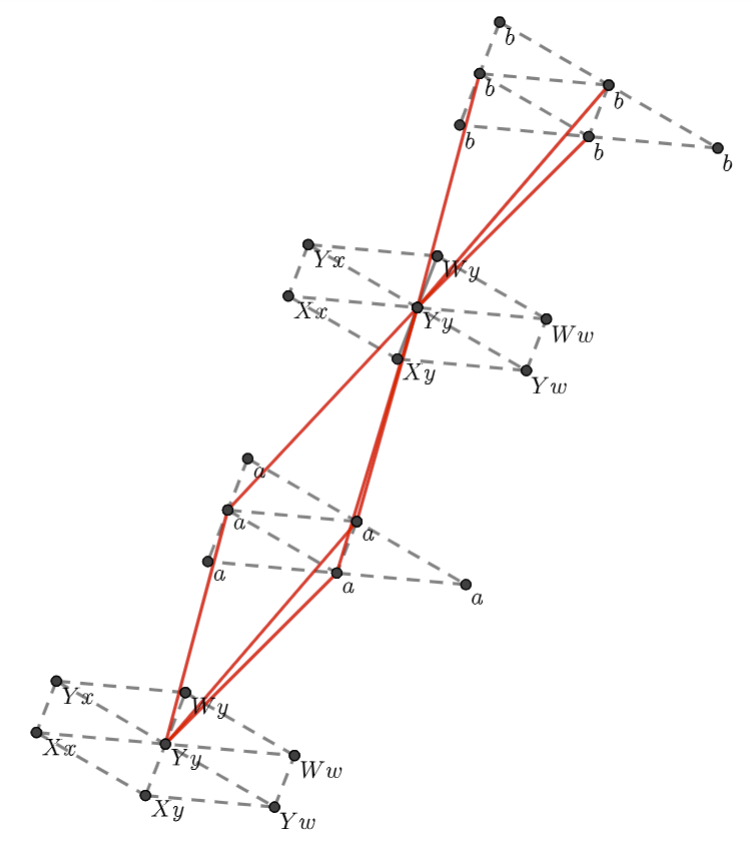}
		\caption{}
	\end{minipage}\hfill
	\begin{minipage}[h]{0.48\textwidth}
		\centering
		\includegraphics[width=\textwidth]{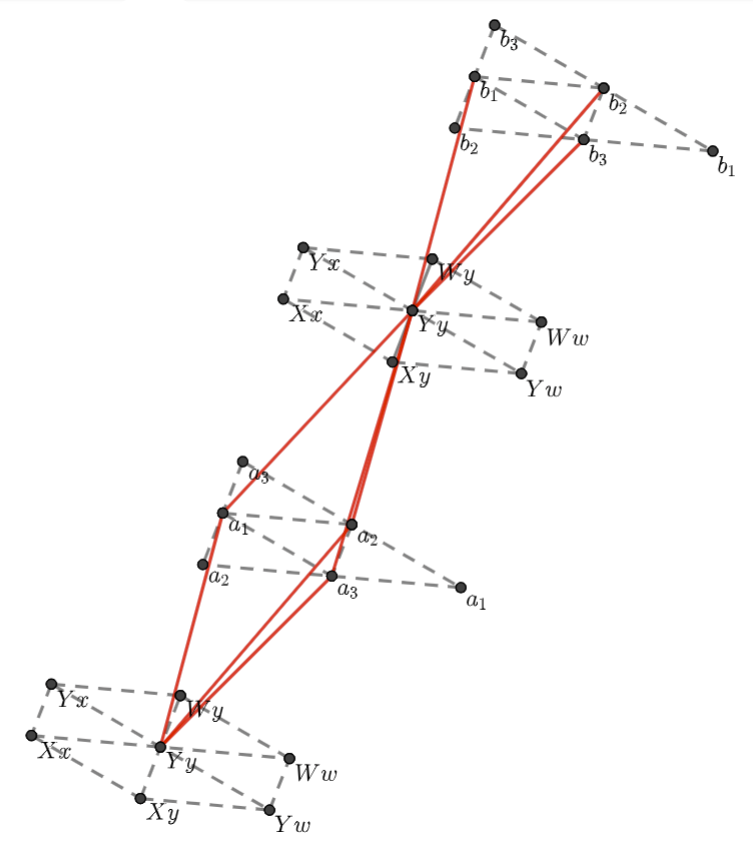}
		\caption{}
	\end{minipage}
\end{figure}	

Like the proof of Theorem 2.4, all repetitive paths under coloring $f_0$ can be \lq confined\rq\ within a small region (the red edges in Figure 10). By dividing the colors $a,$ $b,$ $c,$ $d$ into $a_i,$ $b_i,$ $c_i,$ $d_i,$ $i=1,2,3$, we destroy all repetitive paths (Figure 11). At this point, the total number of colors we use is 16 + 3×4 = 28.
\end{proof}

Why don't we study the higher-dimensional cases? In fact, our method cannot yield better results in grids with high dimensions. When constructing a nonrepetitive coloring of an $s$-dimensional grid, we need to utilize a walk-nonrepetitive coloring for an $(s-1)$-dimensional grid, which requires $4^{s-1}$ colors. However, the maximum degree of an $s$-dimensional grid is $2s-2$. In \cite{alon2002nonrepetitive}, Alon et al. proved that $\pi(G)=O(\Delta^2)$ where $\Delta$ denotes the maximum degree of $G$. This bound is superior for high-dimensional grids.

\newpage
\section{Lower bound}
Since $C_{10}$ is a subgraph of $P_n\square P_n$ and $\pi(C_{10})=4$ (see \cite{currie2002there} or \cite{WoodDavidR2021NGC}), it follows that $\pi(P_n\square P_n)\ge4$ for large $n$. In this section we prove that $\pi(P_n\square P_n)\ge5$ for large $n$.

For the sake of convenience, we express the (partial) coloring of the grid in the form of a matrix. For a grid $P_m\square P_n$ with a coloring, the $(i,j)$ element of the corresponding matrix is the color of the vertex $v_{(i,j)}$. For instance, the corresponding matrix of the coloring in Figure 12 is 

\[  
\left[  
\begin{array}{ccc}  
	a & c &   \\  
	  & d & b \\  
	a &   &    
\end{array}  
\right]  
.\]

\begin{figure}[h] 
	\centering
	\includegraphics[width=0.2\linewidth]{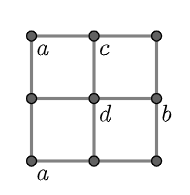}
	\caption{}
\end{figure}

\begin{theorem}
$\pi(P_n\square P_n)\ge5$ for large $n$.
\end{theorem}

\begin{proof}
Let $n$ be a sufficiently large positive integer. Suppose $P_n\square P_n$ has a nonrepetitive 4-coloring $f:V(P_n\square P_n)\to\{a,b,c,d\}$. We proceed to construct a contradiction in three steps as follows.

Note that we only have 4 colors available to use. When a local coloring is determined, in order to ensure that no repetitive path occurs, it can be deduced that certain vertices around it must be of certain colors. We use long right arrows between matrices to represent such deductions. For example, the following notation means that when the local coloring is described by the matrix on the left, the color of the vertex at the bottom left corner can only be $c$.

\[  
\begin{array}{ccc}  
	\begin{bmatrix}  
		  & a \\  
		a & d \\
		  & b
	\end{bmatrix}  
	& \longrightarrow &  
	\begin{bmatrix}  
		  & a \\  
		a & d \\
		c & b
	\end{bmatrix}   
	
\end{array}  
.\]  

Step 1. There must be some $C_4$ (as a subgraph of $P_n\square P_n$) that uses only 3 colors.

If every $C_4$ uses 4 colors, we only need to discuss the following two cases:

(1.1).\[  
\begin{array}{ccc}  
	\begin{bmatrix}  
		  &   & a \\  
		a & b &  \\
		d & c &  
	\end{bmatrix}  
	& \longrightarrow &  
	\begin{bmatrix}  
		  & c & a\\  
		a & b & d\\
		d & c &  
	\end{bmatrix}   
	\quad \longrightarrow \quad  
	\begin{bmatrix}  
	   d & c & a\\  
       a & b & d\\
       d & c & ! 
	\end{bmatrix}  
\end{array}  
.\]   

The exclamation mark $(!)$ indicates that regardless of which color is placed in this position, it will either result in a repetitive path or contradict the assumption.

(1.2).\[  
\begin{array}{ccc}  
	\begin{bmatrix}  
	   	  &   &  \\  
		a & b & a \\
		d & c &  
	\end{bmatrix}  
	& \longrightarrow &  
	\begin{bmatrix}  
	  	  &  & \\  
		a & b & a\\
		d & c & d
	\end{bmatrix}   
	\quad \longrightarrow \quad  

\end{array}  
\]   

(1.2.1).\[  
\begin{array}{ccc}  
	\begin{bmatrix}  
		d &   &  \\  
		a & b & a \\
		d & c & d
	\end{bmatrix}  
	& \longrightarrow &  
	\begin{bmatrix}  
		d & c & ! \\  
		a & b & a \\
		d & c & d
	\end{bmatrix}   
\end{array}  
;\] 

(1.2.2).\[  
\begin{array}{ccc}  
	\begin{bmatrix}  
		c &   &  \\  
		a & b & a \\
		d & c & d
	\end{bmatrix}  
	& \longrightarrow &  
	\begin{bmatrix}  
		c & d & c \\  
		a & b & a \\
		d & c & d
	\end{bmatrix}  
	\quad \longrightarrow \quad  
	\begin{bmatrix}  
		c & d & c & b \\  
		a & b & a & ! \\
		d & c & d &  
	\end{bmatrix} 
\end{array}  
.\] 

Now we have proven that there always exists a $C_4$ that is colored using only 3 colors. Without loss of generality, we assume that the coloring pattern of this $C_4$ corresponds to matrix 

\[  
\left[  
\begin{array}{ccc}  
	a & b \\  
	c & a   
\end{array}  
\right]  
.\]

Step 2. The 12 vertices surrounding such a $C_4$ cannot be colored with color $a$, that is, all elements marked with $X$ in following matrix cannot be $a$.

\[  
\left[  
\begin{array}{cccc}  
	X & X & X & X \\
	X & a & b & X \\  
	X & c & a & X \\
	X & X & X & X  
\end{array}  
\right]
.\]

By symmetry, we only need to discuss the following three cases:

(2.1).\[  
\begin{array}{cccc}  
	\begin{bmatrix}  
	  &   &   & a \\
	  & a & b &   \\  
	  & c & a &   \\
	  &   &   &    
	\end{bmatrix}  
	& \longrightarrow &  
\end{array}  
\] 

(2.1.1).\[  
\begin{array}{cccc}  
	\begin{bmatrix}  
		&   & d & a \\
		& a & b & d \\  
		& c & a & ! \\
		&   &   &    
	\end{bmatrix}  

\end{array}  
;\] 

(2.1.2).\[  
\begin{array}{cccc}  
	\begin{bmatrix}  
		&   & c & a \\
		& a & b &  \\  
		& c & a &  \\
		&   &   &    
	\end{bmatrix}  
	& \longrightarrow &  
\end{array}  
\]

(2.1.2.1).\[  
\begin{array}{cccc}  
	\begin{bmatrix}  
		&   & c & a \\
		& a & b &  \\  
		& c & a &  \\
		& a &   &    
	\end{bmatrix}  
	& \longrightarrow &  
	\begin{bmatrix}  
		&   & c & a \\
	  d & a & b &   \\  
		& c & a &   \\
		& a & d &    
	\end{bmatrix}   
	\quad \longrightarrow \quad  
		\begin{bmatrix}  
		&   & c & a \\
	  d & a & b &   \\  
	  a & c & a &   \\
	  ! & a & d &    
	\end{bmatrix}   
\end{array}  
;\]

(2.1.2.2).\[  
\begin{array}{cccc}  
	\begin{bmatrix}  
		&   & c & a \\
		& a & b &  \\  
		& c & a &  \\
		& d & ! &    
	\end{bmatrix}  
\end{array}  
.\]

(2.2).\[  
\begin{array}{cccc}  
	\begin{bmatrix}  
		&   &   &   \\
		& a & b &   \\  
		& c & a &   \\
		&   &   & a  
	\end{bmatrix}  
	& \longrightarrow &  
	\begin{bmatrix}  
		&   &   &   \\
		& a & b &   \\  
		& c & a & ! \\
		&   & d & a  
	\end{bmatrix}  
\end{array}  
.\]

(2.3).\[  
\begin{array}{cccc}  
	\begin{bmatrix}  
		&   & a &   \\
		& a & b &   \\  
		& c & a &   \\
		&   &   &  
	\end{bmatrix}  
	& \longrightarrow &  
	\begin{bmatrix}  
		& d & a &   \\
	  ! & a & b &   \\  
		& c & a &   \\
		&   &   &   
	\end{bmatrix}  
\end{array}  
.\] 

This indicates that the elements marked with $X$ in the matrix can only be $b,$ $c,$ or $d$. 

Step 3. A simple observation.
\[  
\begin{array}{cccc}  
	\begin{bmatrix}  
		& Y & Y & Y \\
		& a & b & Y \\  
		& c & a & Y \\
		&   &   &    
	\end{bmatrix}    
\end{array}  
.\] 

Based on the fact that $\pi(P_5)=3$, we can make a simple observation: among the elements marked with $Y$ in above matrix, $b$ must appear. However, to avoid the repetitive path, $b$ can only be placed in the top-right corner. Therefore, we only need to derive a contradiction from the following case.

\[  
\begin{array}{cccc}  
	\begin{bmatrix}  
		&   &   & b \\
		& a & b &   \\  
		& c & a &   \\
		&   &   &    
	\end{bmatrix}  
	& \longrightarrow &  
	(\text{By symmetry})
	\begin{bmatrix}  
		&   & c & b \\
		& a & b & d \\  
		& c & a & ! \\
		&   &   &    
	\end{bmatrix}  
\end{array}  
.\] 

This completes the proof.
\end{proof}

\section{The Cartesian product of complete graphs}

In Open Problem 3.28 of \cite{WoodDavidR2021NGC}, the nonrepetitive coloring of $K_n\square K_n$ plays an important role in studying the $edge$-$nonrepetitive$ $coloring$ of $K_n$. Below we give a coloring to prove that $\pi(K_n\square K_n)$ is at most $(\frac{1}{2}+o(1))n^2$.

\begin{theorem}
	For even $n\ge 4,$ $\pi(K_n\square K_n)\le\frac{n^2}{2}.$
\end{theorem}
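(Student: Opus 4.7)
The plan is to exhibit a non-repetitive coloring of $K_n\square K_n$ with exactly $n^2/2$ colors. The tight counting constraints will dictate the structure: since each row and each column is a clique $K_n$, a proper coloring uses $n$ colors per row/column. With $n^2$ cells and $n^2/2$ colors, every color class has exactly two cells, and those two cells must differ in both coordinates (else they would lie in a shared row or column and the coloring would be improper). Hence the coloring is equivalent to a perfect matching $M$ on the vertex set whose matched pairs are cells differing in both coordinates.

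I would build $M$ using the diagonal structure of the grid. Identify $V(K_n\square K_n)=\mathbb{Z}_n\times\mathbb{Z}_n$ and write $D_k=\{(i,j):j-i\equiv k\pmod n\}$ for each $k\in\mathbb{Z}_n$; each $D_k$ is an $n$-cell transversal whose cells are pairwise non-adjacent. Within each $D_k$ I fix a fixed-point-free involution $\mu_k$ of $\mathbb{Z}_n$ and match $(i,i+k)$ with $(\mu_k(i),\mu_k(i)+k)$, producing $n\cdot(n/2)=n^2/2$ color classes. Properness is automatic: same-colored cells lie on a common diagonal and so differ in both coordinates, while cells in different diagonals trivially have different colors.

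To verify non-repetitiveness, any repetitive path $v_1\cdots v_{2k}$ satisfies that $\{v_i,v_{k+i}\}$ lies on a common diagonal $D_{d_i}$ and identifies the two row-indices under $\mu_{d_i}$; since adjacent cells lie on different diagonals, the sequence $d_1,\ldots,d_k$ has distinct consecutive entries. A length-$4$ repetitive path is localized to a single $2\times 2$ sub-grid by the argument used in Theorem~2: the two possible step-patterns (row--column--row and column--row--column) each translate repetitiveness into an identity of the form $\mu_{k_1}(i)=\mu_{k_1+\delta}(i)=i+\delta$ with $\delta\ne 0$. The plan is to choose the family $\{\mu_k\}$ so that such an identity is always impossible. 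A natural candidate is to let $\mu_k$ depend only on the parity of $k$, alternating between two carefully chosen involutions of $\mathbb{Z}_n$ whose "displacement functions" $i\mapsto\mu(i)-i$ have disjoint images; the hypotheses $n$ even and $n\geq 4$ give enough room to arrange this.

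The main obstacle lies in extending the analysis to repetitive paths of length $\geq 6$, where the diagonal sequence $d_1,\ldots,d_k$ along the first half may involve three or more distinct values and the path traces a longer staircase through the grid. Unlike the proofs of Theorems~2--5, which relied on Theorem~1's rigidity lemma for lazy walks in an infinite palindrome-free Thue sequence, here the diagonal index set $\mathbb{Z}_n$ is finite and we must exploit the structure of $\{\mu_k\}$ directly. I plan to show by induction on $k$ that any length-$2k$ repetitive path contains a length-$4$ repetitive sub-configuration, by extracting two consecutive segments of the staircase and invoking that the constraints on $\mu_{d_{i-1}},\mu_{d_i},\mu_{d_{i+1}}$ force a local $2\times 2$ repetition. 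If the induction leaves exceptional staircase shapes, the finiteness of $\mathbb{Z}_n$ permits a residual case analysis to close the proof.
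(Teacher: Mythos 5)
Your framing of the problem---a proper coloring with $n^2/2$ colors as a fixed-point-free involution (perfect matching) on $V(K_n\square K_n)$ whose pairs differ in both coordinates---is a reasonable recasting, and it is worth noting that the paper's coloring also has all color classes of size exactly two. (Your justification ``every color class has exactly two cells'' does not actually follow from averaging, since a proper $n^2/2$-coloring can have unbalanced classes; but since you are free to build the matching, this is only a cosmetic slip.) However, your construction diverges from the paper's in an important way: you pair cells along the diagonals $D_k$, whereas the paper pairs a cell on the left half with a cell on the right half offset by one of two fixed ``shift'' vectors, governed by a blue/yellow parity of half-rows. These are genuinely different matchings, so this is not the same proof.

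More importantly, your argument has concrete gaps. First, the ``natural candidate'' of parity-alternating involutions $\alpha,\beta$ fails at the boundary case $n=4$. Working out the conditions for no length-$4$ repetitive path in your matching, one finds (say with $v_1=(i_1,j_1)$, $v_3=(i_3,j_3)$ matched on diagonal $d'$ and $v_2$ the corner sharing a coordinate with each) two families of constraints: roughly $\mu_d(i)\ne\mu_{d'}(i)$ and $\mu_e(i)\ne 2i-\mu_{d'}(i)$ for appropriate $d,e$ differing from $d'$ by $\mu_{d'}(i)-i$. To make $d,d'$ land on different parities you need every $\mu_k$ to pair odd with even residues; for $n=4$ there are only two such involutions, $\{01,23\}$ and $\{03,12\}$, and with these $\beta(0)=3=2\cdot0-\alpha(0)\pmod4$, violating the second constraint. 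So your candidate $\{\mu_k\}$ has no valid instantiation at $n=4$, which is inside the stated range. Second, the claim that ``a length-$4$ repetitive path is localized to a single $2\times2$ sub-grid by the argument used in Theorem 2'' does not transfer: in $K_n\square K_n$, adjacency is clique adjacency, not nearest-neighbor, so the four cells form a rectangle-like pattern but can sit anywhere in the $n\times n$ array; the Theorem~2 argument relies crucially on unit steps in $P\square P$. Third, and most seriously, the reduction of arbitrary even-length repetitive paths to length-$4$ repetitive paths is asserted but not proved. For a repetitive $v_1\cdots v_{2k}$ you would need some consecutive quadruple $v_i,v_{i+1},v_{k+i},v_{k+i+1}$ to itself be a path, which requires $v_{i+1}\sim v_{k+i}$ --- there is no reason this adjacency holds. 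In general, absence of short repetitions does not imply absence of long ones (this is the whole content of Thue's theorem), so this reduction would need a specific structural argument about your matching, which you do not supply; you acknowledge the obstacle, but the ``residual case analysis'' fallback is not a proof. By contrast, the paper's proof isolates a single global property of its matching (two edges with the same pair of endpoint colors must agree in ``type'') from which non-repetitiveness for all lengths follows at once, precisely avoiding the length-by-length escalation your plan runs into.
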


\begin{proof}
	\begin{figure}[h] 
		\centering
		\includegraphics[width=0.4\linewidth]{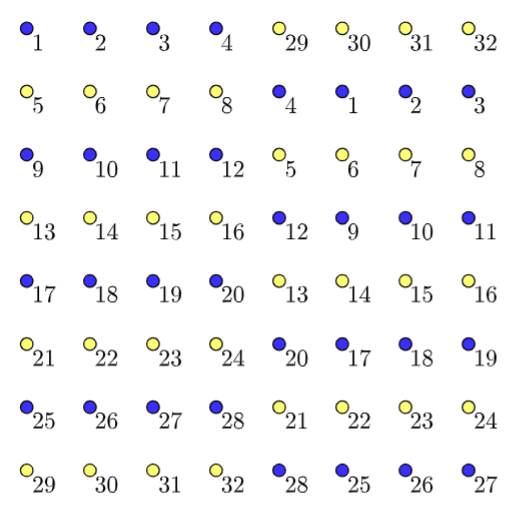}
		\caption{}
	\end{figure}
Label the vertices of $K_n\square K_n$ sequentially from $v_{(1,1)}$ to $v_{(n,n)}$, where $v_{(i,j)}\sim v_{(i',j')}$ if and only if $i=i'$ and $j\not=j'$ or $j=j'$ and $i\not=i'$. See Figure 13. Each row or each column induces a complete graph. Now we design a coloring $f:V(K_n\square K_n)\to[\frac{n^2}{2}]$ as follows:

For $j\le\frac{n}{2}$, let $f(v_{(i,j)})=\frac{n}{2}(i-1)+j$.

For $\frac{n}{2}\le j\le n$, we give different coloring schemes based on the parity of $i$.

When $i$ is odd, let
\[f(v_{(i,j)})=
\left\{
\begin{array}{lcl}
	f(v_{(i-1,j-\frac{n}{2})}),&i\not=1;\\
	f(v_{(n,j-\frac{n}{2})}),&i=1.\
\end{array}
\right.
\]

When $i$ is even, let
\[f(v_{(i,j)})=
\left\{
\begin{array}{lcl}
	f(v_{(i-1,j-\frac{n}{2}-1)}),&j\not=\frac{n}{2}+1;\\
	f(v_{(i-1,\frac{n}{2})}),&j=\frac{n}{2}+1.\
\end{array}
\right.
\]
Then $f$ is a nonrepetitive coloring of $K_n\square K_n$. This verification is straightforward, and we can illustrate it intuitively with Figure 13. We partition the vertices of the graph into two distinct perspectives: first, into left and right parts, and second, into blue and yellow types (note that blue and yellow here are not colors, but merely labels for types). It is noted that at this point, for any two disjoint edges, if they have the same starting color and ending color, respectively, then their vertices must belong to the same type and are located respectively in the left part and the right part. Consequently, under this coloring, there is no repetitive path.
\end{proof}

As a straightforward corollary, $\pi(K_n\square K_n)\le(\frac{1}{2}+o(1))n^2$ when $n\to\infty$.

\section{Discussions}

\begin{question}
Let $G$ be a regular triangular tiling graph (Figure 9) of any size. Is that $\pi(G)<16?$
\end{question}

The regular triangular tiling graph is a type of graph that lies between $P_n\square P_n$ and $P_n\boxtimes P_n$, and it is also a planar graph. Research on this question will directly contribute to the nonrepetitive coloring of planar graphs.

\begin{question}
Is that $\pi(P_n\square P_n)>5$ for large $n?$
\end{question}

In Section 3, we have proven that $\pi(P_n\square P_n)\ge5$ by analyzing some very small local structures. Can this approach be further developed or refined? Perhaps computers can help.

\begin{question}
Is there any constant $c>0$ such that $\pi(K_n\square K_n)\ge cn^2?$
\end{question}

In the previous text, we have mentioned that $\pi(G) = O(\Delta^2)$. For the lower bound, Alon et al. proved in \cite{alon2002nonrepetitive} using probabilistic methods that for all $\Delta$, there exists a graph $G$ with maximum degree $\Delta$ such that $\pi(G) \ge \frac{\Delta^2}{\log\Delta}$. Since the maximum degree of $K_n \square K_n$ is $2n-2$, if the answer to Question 5.3 is yes, then we would be able to improve the lower bound to the same order of magnitude as the upper bound.

\section{Acknowledgments}
Thanks to Peter Bradshaw, Hehui Wu, Qiqin Xie, Ningyuan Yang and Wentao Zhang for the early discussion of this project and the helpful comments.

\bibliographystyle{plain}
\bibliography{reference.bib}

\end{document}